\documentclass [10pt] {article}
\title{{\bf An Extension of the Classical Gauss Series-product Identity by Fermionic Construction of $\widehat{\mathfrak{sl}}_n$  }}
\author{Tomislav \v{S}iki\' c\\
        Department of Mathematics\\
        University of Zagreb\\
       Bijeni\v{c}ka 30\\
      10000 Zagreb, Croatia\\
    E-mail: sikic@math.hr
        }
\date{ 2007.}
\usepackage{eufrak}
\linespread{1.6}
\DeclareSymbolFont{AMSb}{U}{msb}{m}{n}
\DeclareSymbolFontAlphabet{\Bb}{AMSb}

\newtheorem{theorem}{Theorem}[section]

\newtheorem{remark}[theorem]{Remark}

\newtheorem{proposition}[theorem]{Proposition}

\def\proofname{Proof}
\newenvironment{proof}[1][\proofname]{
\par \normalfont \trivlist \itemindent\parindent
  \item[\hskip\labelsep\scshape #1{.}]
\ignorespaces}{  \qed\endtrivlist}


\let\qed\Box

\begin{document}
\maketitle
\begin{abstract}
The main result of this paper is two infinity classes  of
series-product identities which is based on classical Gauss identity
and two different interpretations of character formula for
irreducible highest weight modules of affine Lie algebras.
\end{abstract}

\section{INTRODUCTION}
\def\theequation{\thesection.\arabic{equation}}
\setcounter{equation}{0}

 It is well known that  celebrated Macdonald
identities (and especially the Jacobi triple product identity) are
nothing else than the denominator identity for affine Kac-Moody Lie
algebras. Moreover, some specializations of the denominator identity
give interesting series-product identities. For instance, the
following classical identities
\begin{eqnarray}
\varphi (q) & = & \sum_{n\in \Bb Z} (-1)^{n}q^{(3n^{2}+n)/2}\
(Euler)\ ,\nonumber\\
\varphi (q)^{3} & = & \sum_{n\in \Bb Z} (4n+1)q^{2n^{2}+n} \ (Jacobi)\ ,\nonumber\\
\frac{\varphi (q)^{2}}{\varphi (q^{2})} & = & \sum_{n\in \Bb Z}
(-1)^{n}q^{n^{2}}\ (Gauss)\ ,\nonumber
\end{eqnarray}
and in particular
\begin{equation}\label{1}
\frac{\varphi {(q^2)}^{2}}{\varphi (q)}  =  \sum_{n\in \Bb Z}
q^{2n^{2}+n }\ (Gauss)\ ,
\end{equation}
where $\varphi(q)=\prod_{j\geq 1}(1-q^{j})$ is Euler's product
function, can be also expressed following the same approach (see
\cite{Kac}, Exercise 12.4, pp. 241). It is also quite interesting
that the classical Gauss identity (\ref{1}) arises  from the first
concrete computations of characters of nontrivial modules for affine
Lie algebras (see \cite{FL}).
\\
As Victor Kac writes in his book \cite{Kac} (pp. 216) the basic idea
of this approach is very simple:
 "one gets an interesting identity by computing the
 character of integrable representation in two different ways and
 equating the results. In particular, Macdonald identities are
 deduced via trivial representation."
\\
Following  Kac's inventive consideration the central object of the
observation in this article will be  a character formula of an
irreducible highest weight module $L(\Lambda)$ of the affine Lie
algebra $\widehat{\mathfrak{sl}}_n$, where $\Lambda$ can be any
fundamental weight, not just a trivial representation. The above
observation results with two infinity classes  of series-product
identities. If we denote  by $m$ the arbitrary positive integer  and
by $\kappa$ the following polynomial of the several variables
\[
\kappa (k_1,...,k_{4m-1})= k_1^2+k_2^2+\cdots +k_{4m-1}^2
-k_1k_2-k_2k_3-\cdots-k_{4m-2}k_{4m-1}\
\]
than the first class of series-product identities looks like
\begin{equation}\label{2}
\frac{\varphi(q^{4m-1})^{4m-1}\varphi
(q^{2m})^2}{\varphi(q)\varphi(q^m)}  = \sum_{k_1,...,k_{4m-1}\in\Bb
Z} q^{(4m-1)\kappa(k_1,...,k_{4m-1})+lin (k_1,...,k_{4m-1})} \ ,
\end{equation}
where
\[
lin (k_1,...,k_{4m-1})=(2m-1)k_1 -k_2-\cdots -
k_{3m-1}+(4m-2)k_{3m}-k_{3m+1}-\cdots -k_{4m-1}\ .
\]
The another class
\begin{equation}\label{3}
\frac{\varphi(q^{3m})^{4m}\varphi
(q^{2})^2}{\varphi(q)^2\varphi(q^3)} = \sum_{k_1,...,k_{4m-1}\in\Bb
Z} q^{3m\kappa(k_1,...,k_{4m-1})+\widetilde{lin} (k_1,...,k_{4m-1})}
\end{equation}
holds for
\[
\widetilde{lin} (k_1,...,k_{4m-1})=-3k_1-\cdots -3k_{m-1}
+(3m-2)k_{m}-k_{m+1}-\cdots -k_{4m-2}+(3m-1)k_{4m-1}\ .
\]
Since the identity (\ref{1}) will be essentially involved in the
proof of  mentioned classes (\ref{2}) and (\ref{3}) we can interpret
it as an extension of this classical Gauss identity.

As we say above, we looked at the mentioned object from two
different points of view. One point of view is  based on the
\emph{character formula}
\begin{equation}\label{4}
ch\ {L(\Lambda)}=e^{\frac{1}{2}{|\Lambda|}^2\delta}\frac
{\sum_{\gamma\in\overline{Q}+\overline{\Lambda}}e^{\Lambda_0+\gamma-\frac{1}{2}|\gamma|^{2}\delta}}{\prod_{j\geq
1}(1-e^{-j\delta})^{mult\ j\delta}}\ .
\end{equation}
for any dominant integral weight $\Lambda$ in the special case of
the affine Lie algebras of type $A_l^{(1)}$, $D_l^{(1)}$,
$E_l^{(1)}$ (see \cite{Kac} or \cite{K1} and \cite{KP1}).
\\
Another point of view is based on a \emph{boson-fermionic
realization of $L(\Lambda)$ for affine Lie algebra}
$\widehat{\mathfrak{gl}}_n$ (see \cite{KL}), parameterized by a
partition of a number $\underline{n} = \{n_1 , n_2 , \cdots ,
n_r\}$. The corresponding character formula for the affine Lie
algebra $\widehat{\mathfrak{sl}}_n$ in the original notation
\cite{KL} is
\begin{equation}\label{5}
Trace_{L(\Lambda_k)}(q) = q^{const}\frac{\varphi
(q)}{\prod_{i=1}^{r} \varphi(q^{1/n_i})} \sum_{k_1+k_2+\cdots
 +k_r=k}q^{\frac{1}{2}(\frac{k_1^2}{n_1}+\frac{k_2^2}{n_2}+ \cdots +
 \frac{k_r^2}{n_r})}\ .
\end{equation}

The connection between two different points of view is made by a
particular specialization ${\cal F}_{\bf{s}} : {\Bb
C}[[e^{\alpha_0},e^{\alpha_1},e^{\alpha_2},..., e^{\alpha_l}]]
\rightarrow {\Bb C}[[q]]$ of type ${\bf{s}}=(s_0,s_1,s_2,...,s_l)$
which satisfies the following formula
\begin{equation}\label{6}
{\cal F}_{\bf{s}} (ch\ L(\Lambda_k))=
q^{const}Trace_{L(\Lambda_k)}(q^N)\ .
\end{equation}
The $n$-tuple  ${\bf{s}}$ and positive integer $N$ are generated (we
shall provide details later) from the same partition
$\underline{n}$.

In this article we would like to emphasize the importance of
(\ref{6}). Observe that both sides of (\ref{6}) contain infinite
sums (denoted by $\sum_{left}$ and $\sum_{right}$) dependent on
significantly different sizes of the set of indices. On the left
hand side we have $n-1$ indices, while on the right hand side we
have $r-1$. Very often we have the case (which is of particular
interest to us) when
\[n\gg r\ .\]
In this particular case we obtain a significant reduction of the sum
$\sum_{left}$
 by the sum $\sum_{right}$ modulo some fraction of   Euler's product functions $\varphi(q)$.
Hypothetically, if we recognize, in particulary  cases,  that  the
sum $\sum_{right}$ is  a one side of the some well known
series-product identity  we can express the sum $\sum_{left}$, which
depends on arbitrary indices using the fraction of  Euler's product
functions $\varphi(q)$.
\\
Moreover, we would like to argue that our approach provides an
algebraic method to reveal numerous important series-product
identities; studied mainly by number-theorists. This is one of the
main points of our article. We illustrate the power of  our method
by constructing two infinite classes of series-product identities;
both are based on the classical Gauss series-product identity
(\ref{1}).

 Finally, let us also mention that these two classes have only
one common element which was the starting point of our research. We
shall conclude this introduction by providing details on this
particular example. This example provides a  review of above
observations and in the same time is a motivation for the
construction of the mentioned two series-product identity classes.
\\
We also believe that this example will guide our reader through the
main body of the article.

Let $\mathfrak{g}$ be a simple Lie algebra of the type $A_3$, i.e.,
$\mathfrak{g} ={\mathfrak{sl}}_4$. For the corresponding affine Lie
algebra $\hat\mathfrak{g} =\widehat{\mathfrak{sl}}_4$, the partition
$\underline{4}= \{1,3\}$ and the fundamental weight $\Lambda_3$ the
character formula (\ref{4}) looks like
\[
ch_{L(\Lambda_3)}=e^{\frac{1}{2}{|\Lambda_3|}^2\delta}\frac
{\sum_{k_1,k_2,k_3\in \Bb Z}e^{\Lambda_0+(k_1+1/4)\alpha_1 +(k_2
+2/4)\alpha_2+(k_3+3/4)\alpha_3-\frac{1}{2}|\gamma|^{2}\delta}}{\prod_{n\geq
1}(1-e^{-j\delta})^3}
\]
where
\[
\frac{1}{2}|\gamma|^{2}=(k_1+\frac{1}{4})^{2}+(k_2+\frac{2}{4})^{2}+(k_3+\frac{3}{4})^{2}-
(k_1+\frac{1}{4})(k_2+\frac{2}{4})-(k_2+\frac{2}{4})(k_3+\frac{3}{4})
\]
and
\[
\delta=\alpha_0+\alpha_1+\alpha_2+\alpha_3
\]
is  the corresponding  imaginary root. \\
For above settings $(\widehat{\mathfrak{sl}}_4,\ \underline{4}=
\{1,3\},\ \Lambda_3)$  the trace formula (\ref{5})  has the
following form
\[
Trace_{L(\Lambda_3)}(q) = q^{const}\frac{\varphi (q)}{
\varphi(q^{1/1})\varphi(q^{1/3})}
\sum_{k_1+k_2=3}q^{\frac{1}{2}(\frac{k_1^2}{1}+\frac{k_2^2}{3})}\ .
\]
Now using the substitution of variables $k_2= 3-k_1$ and classical
Gauss identity (\ref{1}) in the sum
\[
\sum_{k_1+k_2=3}q^{\frac{1}{2}(\frac{k_1^2}{1}+\frac{k_2^2}{3})}
\]
we have
\[
Trace_{L(\Lambda_3)}(q) =
q^{const}\frac{{\varphi(q^{2/3})}^2}{{\varphi(q^{1/3})}^2} \ .
\]
\\
As outlined above, we now apply a particular specialization, in this
case the specialization ${\cal F}_{\bf{s}} :{\Bb
C}[[e^{\alpha_0},e^{\alpha_1},e^{\alpha_2},
e^{\alpha_3}]]\rightarrow {\Bb C}[[q]] $  defined by parameters
\[{\bf{s}}=(s_0,s_1,s_2,s_3)= (2,-1,1,1)\]
 which are also generated by the
partition $\underline{4}=\{1,3\}$. Now, using (\ref{6}) for $N=3$ we
obtain
\[
\frac{{\varphi(q^{2})}^2
{\varphi(q^{3})}^3}{{\varphi(q)}^2}=\sum_{k_1,k_2,k_3\in \Bb Z}
q^{3(k_1^{2}+k_2^{2}+k_3^{2}-k_1 k_2-k_2 k_3)+k_1-k_2+2k_3} \ ,
\]
which seems to be a new series-product identity extended from the
classical Gauss identity (\ref{1}).

\section{THE NOTATION AND BASIC SETTINGS }
\def\theequation{\thesection.\arabic{equation}}
\setcounter{equation}{0}
Let $\mathfrak{g}={\mathfrak{sl}}_n$ be a
simple Lie algebra defined for the  Dynkin diagram $A_l$, where
$l=n-1$.  Denote by $\mathfrak{h}$ the corresponding Cartan
subalgebra and by
\[\Delta =(\alpha_1,...,\alpha_l)\]
the basis of the root system ${\cal R}$($\subset\mathfrak{h}^{*} $).
Besides, by $\theta$ we denote the highest root of the root system
${\cal R}$ . It is well known that
\begin{eqnarray}\label{2.1}
&{\cal R}  =  \{\pm(\varepsilon_i -\varepsilon_j) | 1\leq i<j\leq l+1\}&\nonumber\\
&\alpha_1 =  \varepsilon_1 -\varepsilon_2 ,\alpha_2 = \varepsilon_2
-\varepsilon_3 ,...,\alpha_{l-1}= \varepsilon_{l-1} -\varepsilon_l,
\alpha_l =  \varepsilon_l-\varepsilon_{l+1}&\
\\
&\theta  =  \alpha_1 +\alpha_2 + \alpha_3 +\cdots +\alpha_l =
\varepsilon_1 -\varepsilon_{l+1}&\nonumber\ .
\end{eqnarray}
Let
\[
\mathfrak{g}=\mathfrak{h}\oplus\bigoplus_{\alpha\in {\cal
R}}\mathfrak{g_{\alpha}}
\]
be a root space decomposition of the  simple Lie algebra. Denote by
$x_{\alpha}\in \mathfrak{g}_{\alpha}$ the root vector which
satisfies
\[[x_{\alpha},x_{-\alpha}]=\alpha^{\vee}\]
for the coroot $\alpha^{\vee}$.\\
Let
\[\hat\mathfrak{g}=\mathfrak{g}\otimes\Bb{C}[t,t^{-1}]\oplus(\Bb{C}c+\Bb{C}d)
\]
and write $x(i)=x\otimes t^i$ for $x\in \mathfrak{g}$ and $i\in
\Bb Z$. Then $\hat\mathfrak{g}=\widehat\mathfrak{sl}_n$ is an
affine Lie algebra with
\[
[x(i),y(i)]= [x,y](i+j)+i\delta_{i+j,0}( x\mid y)\ ,
\]
where $( x\mid y)$ is the Killing form for the simple Lie algebra
$\mathfrak{g}$, $c$ being a central element
\[
c=\sum_{i=0}^l \alpha_i^{\vee}
\]
 and $d$ a scaling
element
\[[d,x(i)]=ix(i)\ .\]
The Cartan subalgebra of
$\hat\mathfrak{g}$ is given by
\[\hat\mathfrak{h} = \mathfrak{h}\oplus (\Bb{C}c+\Bb{C}d)\ .\]
The corresponding Dynkin diagram is of type $A_l^{(1)}$ and related
numerical labels are
\begin{equation}\label{2.2}
a_{A^{(1)}_l}  =  (1,1,1,...,1,1,1) \ .
\end{equation}
We denote by $\delta$ the linear functional on CSA
$\hat\mathfrak{h}$ defined by
\[
\delta\mid_{\mathfrak{h}\oplus \Bb{C}c} = 0 \hskip1cm \langle \delta
,d\rangle =( \delta\mid d)= 1 .
\]
The affine Lie algebra root system $\hat{\cal
R}$($\subset\hat\mathfrak{h}^{*}$) is composed of the real and
imaginary roots
\[
\hat{\cal R} = \hat{\cal R}^{Re}\cup \hat{\cal
R}^{Im}=\{\alpha+n\delta|\alpha \in {\cal R}, n\in\Bb Z
\}\cup\{n\delta|n\in \Bb Z \setminus \{0\}\}\ .
\]
If we denote by $\alpha_0$ the following root
\begin{equation}\label{2.3}
\alpha_0 = -\theta +\delta
\end{equation}
then
\[
\hat\Delta =(\alpha_0,\alpha_1,...,\alpha_l)
\]
form the basis of the root system $\hat{\cal R}$ and $Q=\sum_{i=0}^l
\Bb Z\alpha_i$ is the corresponding root lattice.  \\
The imaginary root $\delta$, spanned in above basis, due to
(\ref{2.2}) and (\ref{2.3}), looks like
\begin{equation}\label{2.4}
\delta=\sum_{i=0}^l a_i\alpha_i=\sum_{i=0}^l \alpha_i\ .
\end{equation}
It is well known that all affine Lie algebras are a Kac-Moody
algebras $\mathfrak{g}(A)$ for a generalized Cartan matrix $A$ of
the corank one.  Since  the affine Lie algebra
$\widehat{\mathfrak{sl}}_n$ is also the Kac-Moody algebra then for
every $\Lambda\in \mathfrak{h}^*$ the irreducible highest-weight
module $L(\Lambda)$ is uniquely defined . The number
\begin{equation}\label{2.5}
\Lambda (c)=\langle\Lambda, c\rangle
\end{equation}
is called the level of the weight $\Lambda$ or of the module
$L(\Lambda)$.\\
Denote by $P(\Lambda)$ the set of all weights of the module
$L(\Lambda)$ and by $mult\ \lambda$ the multiplicity of $\lambda\in
P(\Lambda)$. The  set
\[
P = \{\lambda\in\hat\mathfrak{h}^{*}|\lambda(\alpha_i^{\vee})\in\Bb
Z,\ i=0,1,...,n-1\}
\]
 is called the weight lattice and  weights from $P$ are
called integral weights. Integral weights from
\[
P_+ =  \{\lambda\in P|\lambda(\alpha_i^{\vee})\geq 0,\
i=0,1,...,n-1\}
\]
are called
dominant. The weight lattice contains the root lattice $Q$ and it is
clear that
\[P(\Lambda)\subset P\]
if $\Lambda \in P$. Besides, the irreducible highest-weight
$\mathfrak{g}(A)$-module  is integrable if and only if $\Lambda \in
P_+$. The fundamental weights $\Lambda_i$ for $i=0,1,...,n-1$ are
defined by
\begin{equation}\label{2.6}
\Lambda_i(\alpha_j^{\vee})=\delta_{ij}\ , \ j=0,1,...,n-1\ \ and \ \
\Lambda_i(d)=0\ .
\end{equation}
It is obvious that fundamental weights are always dominant.

\section{THE FIRST POINT OF VIEW}
\def\theequation{\thesection.\arabic{equation}}
\setcounter{equation}{0}

For a subset  $S$ of $\hat\mathfrak{h}^{*}$ denote by $\overline{S}$
the orthogonal projection of $S$ on $\mathfrak{h}^{*}$ by the
extension of  the Killing form from the simple $\mathfrak{g}$ to the
affine Lie algebra $\hat\mathfrak{g}$. Then we have the following
useful formula for $\lambda\in\hat\mathfrak{h}^{*} $
\[
\lambda=\overline{\lambda}+
\lambda(c)\Lambda_0+(\lambda\mid\Lambda_0)\delta
\]
where $\Lambda_0$ is the fundamental weight.\\
Especially for the $\widehat{\mathfrak{sl}}_n$ we have that
$\overline{Q}=\sum_{i=1}^l \Bb Z\alpha_i$ and
\[
\Lambda_i=\Lambda_0+\overline{\Lambda}_i
\]
where $\overline{\Lambda}_0=0$ and
$\overline{\Lambda}_1,...,\overline{\Lambda}_{n-1}$ are the
fundamental weights of simple Lie algebra $\mathfrak{sl_n}$.
\newline
Since ${\Lambda}_i\in P_+$ then for all fundamental weights the
irreducible highest-weight\\ $\widehat{\mathfrak{sl}}_n$-module
$L({\Lambda}_i)$ is integrable.\\
Besides, from (\ref{2.5}) and (\ref{2.6}) follow that all
$\widehat{\mathfrak{sl}}_n$-module $L({\Lambda}_i)$ are level one
modules. Using the notation $P_+^1$ for the set of all level one
dominant integral weights we can write
\begin{equation}\label{3.1}
\Lambda_i \in P_+^1\ \ \forall i=0,1,...,n-1\ .
\end{equation}
Moreover, when $\Lambda\in P_+^1$ and the type of  Dynkin diagram is
equal to $A_l^{(1)}$, $D_l^{(1)}$ or $E_l^{(1)}$  the following
formula
\begin{equation}\label{3.2}
P(\Lambda) = \{\Lambda_0+\frac{1}{2}|\Lambda|^{2}\delta +\alpha
-(\frac{1}{2}|\alpha|^{2}+s)\delta\mid\alpha\in\overline{\Lambda}+\overline{Q},
s\in {\Bb Z}_+\}
\end{equation}
explicitly describes the weights system $P(\Lambda)$. This result is
proved in \cite{K1} or \cite{FK}. \\
A weight $\lambda\in P(\Lambda)$ is called maximal if
$\lambda+\delta\notin P(\Lambda)$. Denote by $max(\Lambda)$ the set
of all maximal weights of $L(\Lambda)$. For $\lambda\in
max(\Lambda)$ the series
\[
a_{\lambda}^{\Lambda}=\sum_{n=0}^{+\infty} mult_{L(\Lambda)}(\lambda
- n\delta)e^{-n\delta}\
\]
is well defined. Using the result (\ref{3.2}), the theory of the
series $a_{\lambda}^{\Lambda}$ (which has been started by \cite{FL}
and \cite{K1}) and  the work \cite{KP1} (or \cite{Kac}, Ch.12) the
character formula of $L(\Lambda)$ can be written as
\[
ch_{L(\Lambda)}=e^{\frac{1}{2}{|\Lambda|}^2\delta}
a_{\Lambda_0}^{\Lambda_0}\sum_{\gamma\in\overline{Q}+\overline{\Lambda}}e^{\Lambda_0+\gamma-\frac{1}{2}|\gamma|^{2}\delta}
=e^{\frac{1}{2}{|\Lambda|}^2\delta}\frac
{\sum_{\gamma\in\overline{Q}+\overline{\Lambda}}e^{\Lambda_0+\gamma-\frac{1}{2}|\gamma|^{2}\delta}}{\prod_{n\geq
1}(1-e^{-n\delta})^{mult\ n\delta}}
\]
Since the Dynkin diagram of affine Lie algebra
$\widehat\mathfrak{sl}_n$ is equal to ${{A}}_l^{(1)}$ and all
fundamental weights $\Lambda_k$ are level one  integral dominant
weights (see \ref{3.1}) then it is obvious that the above character
formula  is appropriate for $\widehat\mathfrak{sl}_n$. Finally, we
finish this exposition  by the character formula
\begin{equation}\label{3.13}
ch_{L(\Lambda_k)}=e^{\frac{1}{2}{|\Lambda_k|}^2\delta}\frac
{\sum_{\gamma\in\overline{Q}+\overline{\Lambda_k}}e^{\Lambda_0+\gamma-\frac{1}{2}|\gamma|^{2}\delta}}{\prod_{n\geq
1}(1-e^{-n\delta})^{mult\ n\delta}}
\end{equation}
for the irreducible integrable  highest weight
$\widehat{\mathfrak{sl}}_n$-module $L({\Lambda}_k)$ where
$k=0,1,...,l$.

\section{THE SECOND POINT OF VIEW}
\def\theequation{\thesection.\arabic{equation}}
\setcounter{equation}{0}

Many of the vertex operator constructions of integrable highest
weight representations are based on an inequivalent Heisenberg
subalgebras. The inequivalent Heisenberg subalgebras, as conjugacy
classes in $S_n$, are parametrized by partitions of $n$. Denote by
$\underline{n}=\{n_1,n_2,...,n_r\}$ a partition of $n$ where
\[
n_1\leq n_2\leq ... \leq n_r\ .
\]
Moreover, the standard canonical base $\{E_{ij}\mid i,j=1,2,...,n\}$
for $\mathfrak{gl}_n$ is also parametrized  by the partition
$\underline{n}$ (see \cite{KL}). The associated partition of
$n\times n$ matices is then given schematically by:
\[
\left[\begin{array}{c|c|c|c}
   {B_{11}}_{\ \ n_1\times n_1} & {B_{12}}_{\ \ n_1\times n_2} & \cdots  & {B_{1s}}_{\ \ n_1\times n_r} \\ \hline
   {B_{21}}_{\ \ n_2\times n_1} & {B_{22}}_{\ \ n_2\times n_2} & \cdots  & {B_{1s}}_{\ \ n_2\times n_r}  \\ \hline
    \vdots & \vdots& \ddots   & \vdots \\ \hline
    {B_{s1}}_{\ \ n_r\times n_1} & {B_{s2}}_{\ \ n_r\times n_2} & \cdots  & {B_{ss}}_{\ \ n_r\times n_r}
\end{array}\right]
\]
where $B_{ij}$ is a block of size $n_i\times n_j$. With this
blockform in mind  the standard canonical base is remodeled with the
set of matrices
\[
\{E_{pq}^{ij}\mid p=1,...,n_i,\ q=1,...,n_j,\ i,j=1,2,...,s\}
\]
where
\begin{equation}\label{4.4}
E_{pq}^{ij}=E_{n_1+\cdots +n_{i-1}+p,n_1+\cdots +n_{j-1}+q}\ .
\end{equation}
Using just mentioned notation, in \cite{KL} authors give an explicit
vertex operator constructions of level one irreducible integrable
highest weight representation of $\widehat{\mathfrak{gl}}_n$ for all
inequivalent Heisenberg subalgebras (i.e. for all partitions).  The
construction uses multicomponent fermionic fields and yields a
correspondence between bosons (elements of Heisenberg subalgebra)
and fermions. The mentioned construction in addition to \cite{KP2}
results with explicit "q-dimension" trace formula for
$\widehat{\mathfrak{gl}}_n$-module
\[
Trace_{\Lambda_k^{\infty}{\Bb C}^{\infty}}q^{D_0} =
q^{\frac{1}{2}{\mid
H_{\underline{n}}\mid}^{2}}\frac{\sum_{k_1+k_2+\cdots
 +k_r=k}q^{\frac{1}{2}(\frac{k_1^2}{n_1}+\frac{k_2^2}{n_2}+ \cdots +
 \frac{k_r^2}{n_r})}}{{\prod_{i=1}^{r}
\prod_{j\geq 1}(1-q^{\frac{j}{n_i}})}}\ ,
\]
 where  $H_{\underline{n}}$ is
element of standard Cartan subalgebra $\mathfrak{h}$ which satisfies
the following commutation relations
\begin{equation}\label{5.27}
ad\
H_{\underline{n}}(E_{kl}^{ij})=[H_{\underline{n}},E_{kl}^{ij}]=(\frac{l}{n_j}-\frac{k}{n_i}+\frac{1}{2n_i}-\frac{1}{2n_j})E_{kl}^{ij}\
.
\end{equation}
Finally, after restriction to $\widehat\mathfrak{sl}_n$ case (also
\cite{KL}), the corresponding irreducible integrable highest weight
module $L(\Lambda_k)$ have the following  trace formula
\begin{equation}\label{4.20}
Trace_{L(\Lambda_k)}(q) = q^{\frac{1}{2}{\mid
H_{\underline{n}}\mid}^{2}}\prod_{j\geq
1}(1-q^{j})\frac{\sum_{k_1+k_2+\cdots
 +k_r=k}q^{\frac{1}{2}(\frac{k_1^2}{n_1}+\frac{k_2^2}{n_2}+ \cdots +
 \frac{k_r^2}{n_r})}}{{\prod_{i=1}^{r}
\prod_{j\geq 1}(1-q^{\frac{j}{n_i}})}}\ .
\end{equation}

\section{THE CONNECTION BETWEEN STANDPOINTS}
\def\theequation{\thesection.\arabic{equation}}
\setcounter{equation}{0}
 Let ${\bf s}=(s_0,s_1,...,s_n)$ be a
sequence of integers. Then the sequence ${\bf s}$ (under some
assumptions) defines a homomorphism
\[
{\cal F}_{\bf s} : {\Bb
C}[[e^{-\alpha_0},e^{-\alpha_1},e^{-\alpha_2},..., e^{-\alpha_n}]]
\rightarrow {\Bb C}[[q]]
\]
by \[
 {\cal F}_{\bf s}(e^{-\alpha_i})= q^{s_i}\ \ (i=0,1,...,n)\ .
\]
This homomorphism is called the specialization of type ${\bf s}$.
\\
Let $N'$ be the least common multiple of $n_1$,$n_2$,...,$n_r$, then
the integer $N$ is defined by:
\begin{equation}\label{5.20}
N=\left\{\begin{array}{ll}
           N' & if\ N'(\frac{1}{n_i}+\frac{1}{n_j})\in 2{\Bb Z}\ \ \ \forall i,j\\
           2N' & if\ N'(\frac{1}{n_i}+\frac{1}{n_j})\notin 2{\Bb Z}\ \ for\
           a\
           pair\ (i,j)\ .
            \end{array}
        \right.
\end{equation}
Following the boson-fermionic construction for
$\widehat\mathfrak{gl}_n $ from the paper \cite{KL} we can conclude
that the connection between the mentioned two different points of
view is made by particular specialization of type
${\bf{s}}=(s_0,s_1,s_2,...,s_n)$ where ${\bf{s}}$ was parametrized
with the  partition $\underline{n}=\{n_1,n_2,...,n_r\}$ and integer
$N$. In fact we have the following proposition.
\begin{proposition}
Let $\underline{n}=\{n_1,...,n_r\}$ be a partition of $n$. Let
 $N$ be the corresponding integer defined by (\ref{5.20}). For affine Lie
 algebra $\widehat\mathfrak{sl}_n$ and for all fundamental weights
 $\Lambda_k$ $k=0,1,...n-1$ the next equation
\begin{eqnarray}\label{5.24}
&&{\cal F}_{\bf s} (\frac
{\sum_{\gamma\in\overline{Q}+\overline{\Lambda}}e^{\Lambda_0+\gamma-\frac{1}{2}|\gamma|^{2}\delta}}{\prod_{j\geq
1}(1-e^{-j\delta})^{mult\ j\delta}})  \nonumber\\
&=& q^{const}\prod_{j\geq 1}(1-q^{jN})\frac{\sum_{k_1+k_2+\cdots
 +k_r=k}q^{\frac{N}{2}(\frac{k_1^2}{n_1}+\frac{k_2^2}{n_2}+ \cdots +
 \frac{k_r^2}{n_r})}}{{\prod_{i=1}^{r}
\prod_{j\geq 1}(1-q^{\frac{jN}{n_i}})}}
 \end{eqnarray}
holds for
\begin{eqnarray}\label{5.24a}
{\bf{s}}&=&
N(\frac{n_1+n_r}{2n_1n_r},\frac{1}{n_1},...,\frac{1}{n_1},\frac{n_1+n_2}{2n_1n_2}-1,
\frac{1}{n_2},...,\frac{1}{n_2},\nonumber\\
& & \\
& &
\frac{n_2+n_3}{2n_2n_3}-1,...,\frac{1}{n_{r-1}},...,\frac{1}{n_{r-1}},\frac{n_{r-1}+n_r}{2n_{r-1}n_r}-1,\frac{1}{n_r},...,\frac{1}{n_r})\
.\nonumber
\end{eqnarray}
\end{proposition}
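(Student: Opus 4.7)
The plan is to identify the specialization ${\cal F}_{\bf s}$ with the principal specialization induced by the semisimple element $N(H_{\underline{n}}+d) \in \hat{\mathfrak{h}}$, apply it to the character formula (\ref{3.13}), and match the result with the trace formula (\ref{4.20}). The first step is to verify that ${\cal F}_{\bf s}(e^{-\alpha}) = q^{N\alpha(H_{\underline{n}}+d)}$ for all weights, which by linearity reduces to checking $s_i = N\alpha_i(H_{\underline{n}}+d)$ on the simple roots. Using the convention $\alpha_j(d) = \delta_{j,0}$ and the commutation relation (\ref{5.27}) applied to root vectors labelled via (\ref{4.4}), one splits into three cases: (a) $\alpha_i$ (with $i \ge 1$) lies strictly inside a block $j$ and corresponds to $E_{p,p+1}^{jj}$ with $1 \le p < n_j$, giving eigenvalue $1/n_j$; (b) $\alpha_i$ (with $i \ge 1$) is the boundary between blocks $j$ and $j+1$, corresponding to $E_{n_j,1}^{j,j+1}$, giving eigenvalue $(n_j+n_{j+1})/(2n_jn_{j+1}) - 1$; (c) for $\alpha_0 = -\theta+\delta$, the root vector $x_{-\theta}$ is $E_{n_r,1}^{r,1}$, giving eigenvalue $(n_1+n_r)/(2n_1n_r) - 1$, to which the $+1$ from $\delta(d)=1$ is added, producing $s_0 = N(n_1+n_r)/(2n_1n_r)$. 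Each value matches (\ref{5.24a}). As an immediate corollary, ${\cal F}_{\bf s}(e^{-\delta}) = q^N$ (also verifiable by a direct telescoping that gives $\sum_i s_i = N$), so the denominator $\prod_{j\ge 1}(1-e^{-j\delta})^{\mathrm{mult}\,j\delta}$ of (\ref{3.13}) specializes to $\prod_{j\ge 1}(1-q^{jN})^{n-1}$, using $\mathrm{mult}\,j\delta = n-1$ for $A_l^{(1)}$.

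The second step is to recognize ${\cal F}_{\bf s}(ch_{L(\Lambda_k)})$ as the graded trace of $q^{N(H_{\underline{n}}+d)}$ on $L(\Lambda_k)$ (with signs and overall normalizers absorbed into $q^{const}$). The boson-fermion realization in \cite{KL} diagonalizes $H_{\underline{n}}$ with exactly the eigenvalues of (\ref{5.27}), so the mode of color $i$ at level $j$ contributes $(1-q^{jN/n_i})^{-1}$ to this trace, yielding the denominator $\prod_{i,j}(1-q^{jN/n_i})$ on the right-hand side of (\ref{4.20}) (evaluated at $q\to q^N$). The zero-modes of the $r$ commuting Heisenberg subalgebras produce $\sum_{k_1+\cdots+k_r=k} q^{\frac{N}{2}\sum k_j^2/n_j}$, the constraint $\sum k_j = k$ reflecting the chosen fundamental weight $\Lambda_k$; the restriction from $\widehat{\mathfrak{gl}}_n$ to $\widehat{\mathfrak{sl}}_n$ inserts the extra numerator factor $\prod_{j\ge 1}(1-q^{jN})$. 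Collecting all pre-factors (namely $e^{\frac{1}{2}|\Lambda_k|^2\delta}$ from (\ref{3.13}) and $q^{N|H_{\underline{n}}|^2/2}$ from (\ref{4.20})) into $q^{const}$ completes the identification.

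The main obstacle is making rigorous the appeal to \cite{KL} regarding the graded structure of the fermionic realization. If a self-contained derivation is preferred, one should parametrize $\gamma \in \overline{\Lambda}_k+\overline{Q}$ by its coordinates $(c_1, \ldots, c_{n-1})$ in the $\alpha$-basis, substitute into the specialized exponent $-\sum s_i c_i - \frac{N}{2}|\gamma|^2$, and perform a block-wise linear change of variables (one ``total charge'' $k_j$ per block together with $n_j-1$ internal coordinates of the $\mathfrak{sl}_{n_j}$ root lattice per block) that decouples this quadratic form into $-\frac{N}{2}\sum_j k_j^2/n_j$ plus a sum of $r$ diagonal $\mathfrak{sl}_{n_j}$ root-lattice quadratic forms. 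The resulting block theta-series, combined with the appropriate portion of the specialized denominator $\prod_{j\ge 1}(1-q^{jN})^{n-1}$, should evaluate via the classical Kac-Peterson formula for the vacuum character of $\widehat{\mathfrak{sl}}_{n_j}$ to precisely the $\prod_{i,j}(1-q^{jN/n_i})^{-1}$ structure, leaving the residual factor $\prod_{j\ge 1}(1-q^{jN})$ in the numerator exactly as in (\ref{5.24}).
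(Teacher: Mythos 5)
Your proposal is correct and follows essentially the same route as the paper: both identify the specialization $\mathbf{s}$ with the eigenvalues of $\mathrm{ad}\,H_{\underline{n}}$ (shifted by $N$ on $\alpha_0$ via the $d$-grading) on the simple root vectors $E_{n,1}\otimes t, E_{1,2},\dots,E_{n-1,n}$ reindexed by the block decomposition (\ref{4.4}), and then invoke the exposition of \cite{KL} leading to (\ref{4.20}) to match the specialized character with the fermionic trace formula. Your explicit three-case verification of (\ref{5.24a}) and the check $\sum_i s_i = N$ are in fact more detailed than the paper's own argument, which leaves these computations implicit.
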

\begin{proof}
Since the root subspaces $\hat{\mathfrak g}_\alpha$, for
$\alpha\in{\hat{\cal R}}^{Re}$, are one-dimensional we have unique
$1-1$ correspondence
\begin{eqnarray}
\alpha_0 &\longleftrightarrow &\hat{x}_{-\theta}\otimes t
\nonumber\\
\alpha_i &\longleftrightarrow  &\hat{x}_{\alpha_i}\otimes 1
\hskip1cm i=1,...,l\nonumber\ .
\end{eqnarray}
Hence, we have  correspondence, based on (\ref{2.1}), between the
base $\Delta$ and the element of standard canonical base of
$\mathfrak{gl}_n$ $\{E_{ij}\mid i,j=1,...,n\}$
\begin{equation}\label{5.25}
\alpha_i \longleftrightarrow  E_{i,i+1}\ \ i=1,...,l\ .
\end{equation}
From (\ref{2.3}) and (\ref{2.4}) it is evident
\begin{equation}\label{5.26}
\alpha_0 \longleftrightarrow  E_{n,1}\otimes t\ .
\end{equation}
The  commutation relations $(\ref{5.27})$ for $adH_{\underline{n}}$
 and $E_{kl}^{ij}$ as indexed in (\ref{4.4}) express
degrees of eigenvectors $E_{kl}^{ij}$  by
\begin{equation}\label{5.28}
deg E_{kl}^{ij}=
N(\frac{l}{n_j}-\frac{k}{n_i}+\frac{1}{2n_i}-\frac{1}{2n_j})\ mod N\
.
\end{equation}
From  (\ref{5.28}) and from the exposition of \cite{KL} which lead
to trace formula (\ref{4.20}) follows that eigenvalues for the
adjoint action of $adH_{\underline{n}}$ are pointers for the right
specialization $\bf s$. More precisely, using $1-1$ correspondence
(\ref{5.25}), (\ref{5.26}) the sequence $\bf s$  consists of the
eigenvalues for eigenvectors
\[\{\alpha_0^{\vee},\alpha_1^{\vee},...,\alpha_{n-1}^{\vee}\}=
\{E_{n,1}\otimes t,E_{1,2}\otimes 1,E_{2,3}\otimes
1,...,E_{n-1,n}\otimes 1\}\ .\]
as shown in
\begin{equation}\label{5.31}
{\bf s}=(degE_{n,1}+N,degE_{1,2},degE_{2,3},...,degE_{n-1,n})\ .
\end{equation}
Due to remodeling ($\ref{4.4}$) of the  standard canonical base by
$\{E_{ij}^{pq}\}$ we conclude that sequence ${\bf s}$ (\ref{5.31})
is equal to (\ref{5.24a}), i.e. the equation (\ref{5.24}) holds.
\end{proof}
\begin{remark}
In the paper \cite{K3}  V.G.Kac showed that the automorphisms
\[
\sigma_{\bf\tilde{s}}(e_i )=e^{\frac{2\pi i}{N}\cdot s_i} e_i
\hskip1cm i=0,1,...,l
\]
exhaust all $N$-th order automorphisms of $\mathfrak{g}$. By $\{e_i
\mid i=0,1,...,l\}$ are marked generators of $\mathfrak g$ and
\[{\bf{s}} =(s_0,s_1,...,s_l)\] is a sequence of nonnegative
relatively prime integers. The parameters $s_i$ are  called the {\bf
Kac parameters}. Many of the vertex operator constructions of
integrable highest weight representations and the corresponding
gradations and specializations do not provide Kac parameters (see,
in particular \cite{KL} and \cite{KP2}). Particulary, the sequence
$\bf s$ (\ref{5.24a}) and the associated specialization are
determined by relatively prime integers, but all integers
$N(\frac{n_i+n_{i+1}}{2n_i n_{i+1}}-1)$ are negative. So, the
specialization (\ref{5.24})
is not parametrized by Kac parameters.\\
In the paper \cite{TS2} it is given the exact algorithm for finding
the Kac parameters ${\bf s}^{Kac}$ of the sequence $\bf s$
(\ref{5.24a}) and equation (\ref{5.24}) holds for specialization by
Kac parameters, too.
\end{remark}

\section{THE CLASS $GAUSS(\underline{n}=1+(4m-1),\Lambda_{3m})$}
\def\theequation{\thesection.\arabic{equation}}
\setcounter{equation}{0}
 Introduce the Euler product
\[
\varphi (q) = \prod_{n=1}^{\infty} (1-q^n)\ .
\]
Denote by $\kappa$ the following polynomial of the several variables
\[
\kappa (k_1,...,k_{l})= k_1^2+k_2^2+\cdots +k_{l}^2
-k_1k_2-k_2k_3-\cdots-k_{l-1}k_{l}
\]
where $l=n-1$. It is interesting to notice that above polynomial can
be interpreted by the Killing form in  such a way that
\[
\kappa (k_1,...,k_{l})=\frac{1}{2}(k_1\alpha_1^{\vee}+\cdots
k_{l}\alpha_{l}^{\vee}\mid k_1\alpha_1^{\vee}+\cdots
k_{l}\alpha_{l}^{\vee})\ .
\]
\begin{theorem}
Let $n=4m$ for an arbitrary positive integer $m$.  Then the
following series-product identity
\begin{equation}\label{6.1}
\sum_{k_1,...,k_{4m-1}\in\Bb Z}
q^{(4m-1)\kappa(k_1,...,k_{4m-1})+lin (k_1,...,k_{4m-1})} =
\frac{\varphi(q^{4m-1})^{4m-1}\varphi
(q^{2m})^2}{\varphi(q)\varphi(q^m)}
\end{equation}
holds for
\begin{eqnarray}\label{6.2}
lin (k_1,...,k_{4m-1})&=&(2m-1)k_1 -k_2-\cdots -
k_{3m-1}+(4m-2)k_{3m}\nonumber\\
&&-k_{3m+1}-\cdots -k_{4m-1}\ .
\end{eqnarray}
\end{theorem}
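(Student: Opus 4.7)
The plan is to derive (6.1) as a two-sided specialization of the character formula (3.13), applying the Proposition of Section~5 with the two-block partition $\underline{4m}=\{n_1,n_2\}=\{1,4m-1\}$ and the fundamental weight $\Lambda_{3m}$, and then collapsing the resulting two-variable theta-like sum to a one-variable Gauss series via (1.1). To set up the specialization, observe that $r=2$, $N'=\mathrm{lcm}(1,4m-1)=4m-1$, and each of the three conditions $N'(1/n_i+1/n_j)\in 2\Bb Z$ is immediate, so $N=4m-1$ by (5.20). Unpacking (5.24a) with $n_1=1,n_2=4m-1$ then yields the explicit sequence
\[
{\bf s}=(s_0,s_1,s_2,\ldots,s_{4m-1})=(2m,\ 1-2m,\ 1,\ 1,\ \ldots,\ 1),
\]
with $s_2=s_3=\cdots=s_{4m-1}=1$.

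Next I would simplify the right-hand side of (5.24). The denominator factor $\prod_{i,j}(1-q^{jN/n_i})$ equals $\varphi(q^{4m-1})\varphi(q)$, and the $r$-fold sum collapses under $k_2=3m-k_1$ to a single sum with exponent $2mk_1^2-3mk_1+9m^2/2$. The shift $k_1\mapsto n+1$ transforms the remaining series into $q^{-m}\sum_{n\in\Bb Z}q^{m(2n^2+n)}$, which by the Gauss identity (1.1) with $q\mapsto q^m$ equals $q^{-m}\varphi(q^{2m})^2/\varphi(q^m)$. Consequently, up to a tracked scalar $q^{c_1}$, the right-hand side of (5.24) simplifies to $\varphi(q^{2m})^2/(\varphi(q)\varphi(q^m))$.

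Turning to the left-hand side of (5.24), I would use $mult\ j\delta=4m-1$ for $\widehat{\mathfrak{sl}}_{4m}$, so the eta-product $\prod_{j\ge 1}(1-e^{-j\delta})^{mult\ j\delta}$ specializes to $\varphi(q^{4m-1})^{4m-1}$; clearing this factor installs the target $\varphi(q^{4m-1})^{4m-1}$ on the right-hand side of (6.1). For the theta numerator, I would parametrize $\gamma=\overline{\Lambda}_{3m}+\sum_{i=1}^{4m-1}k_i\alpha_i$ and use $(\overline{\Lambda}_j\mid\alpha_i)=\delta_{ij}$ together with the Cartan matrix of type $A_{4m-1}$ to obtain
\[
\frac12|\gamma|^2=\frac12|\overline{\Lambda}_{3m}|^2+k_{3m}+\kappa(k_1,\ldots,k_{4m-1}).
\]
Then ${\cal F}_{\bf s}(e^{-\frac12|\gamma|^2\delta})$ produces $q^{(4m-1)\kappa(k)+(4m-1)k_{3m}+(4m-1)|\overline{\Lambda}_{3m}|^2/2}$, while ${\cal F}_{\bf s}(e^{\gamma})$ produces $q^{-\sum k_i s_i}$ up to an overall constant. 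The net coefficient of $k_i$ is $-s_i+(4m-1)\delta_{i,3m}$; reading off from ${\bf s}$ gives $2m-1$ at $i=1$, the value $-1$ at each $i\in\{2,\ldots,4m-1\}\setminus\{3m\}$, and $4m-2$ at $i=3m$, matching $lin(k_1,\ldots,k_{4m-1})$ of (6.2) exactly.

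Finally, I would reconcile the overall $q^{c_1-c_2}$ factor, whose contributions come from the prefactor $\frac12|\Lambda_{3m}|^2\delta$ in (3.13), the $\frac{4m-1}{2}|\overline{\Lambda}_{3m}|^2$ term above (using $|\overline{\Lambda}_{3m}|^2=3m/4$ for $\mathfrak{sl}_{4m}$), and the $q^{9m^2/2-m}$ from the Gauss shift. Rather than compute these separately, I would match lowest-order terms: the $(k_1,\ldots,k_{4m-1})=0$ summand on the left gives $q^0$, and the constant term of the $\varphi$-product on the right is also $q^0$, forcing $c_1=c_2$ and yielding (6.1). The principal obstacle is the explicit unpacking of (5.24a) into the sequence ${\bf s}=(2m,1-2m,1,\ldots,1)$; once this is in hand, matching the linear form $lin$ in (6.2) is a clean bookkeeping exercise, and the overall normalization is settled automatically by the comparison of constant terms.
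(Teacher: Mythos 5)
Your proposal is correct and follows essentially the same route as the paper: the partition $\underline{4m}=\{1,4m-1\}$ with $\Lambda_{3m}$, the specialization ${\bf s}=(2m,1-2m,1,\ldots,1)$ with $N=4m-1$, the collapse of the two-index sum via $k_2=3m-k_1$ and the Gauss identity, and the identification of $lin$ from $-s_i+(4m-1)\delta_{i,3m}$. The only point where you go beyond the paper is in pinning down the overall $q^{const}$ by matching constant terms (which tacitly requires $(4m-1)\kappa(k)+lin(k)>0$ for $k\neq 0$), whereas the paper simply absorbs all such factors into $q^{const}$ and declares the identity obvious.
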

\begin{proof}
First of all, the proof is based on the following setting:
\begin{eqnarray}\label{6.3}
\hat\mathfrak{g}& = & \widehat\mathfrak{sl}_{4m}\nonumber\\
\underline{n} & = & \underline{4m}=\{1,4m-1\}\\
\Lambda & =& \Lambda_{3m}  =  \Lambda_0 +
\overline{\Lambda}_{3m}\nonumber\ .
\end{eqnarray}
Since $\overline{\Lambda}_{3m}$ is the corresponding fundamental
weight of simple Lie algebra $\mathfrak{sl}_{4m}$ we can write (see
\cite{Hum}, pp. 69):
\begin{eqnarray}\label{6.4}
\overline{\Lambda}_{3m} & = &  \frac{1}{4m}[m\alpha_1
+2m\alpha_2+\cdots +(3m-1)\cdot m\alpha_{3m-1} + 3m\cdot
m\alpha_{3m}\nonumber \\
&& +3m\cdot (m-1)\alpha_{3m+1}+3m\cdot(m-2)\alpha_{3m+2}\\
& &  +\cdots +3m\cdot 2\alpha_{4m-2}+3m\cdot
1\alpha_{4m-1}]\nonumber\ .
\end{eqnarray}
The numerator of the formula (\ref{3.13}) looks like
\[
e^{\Lambda_0
+\frac{1}{2}|\Lambda_{3m}|^2\delta}\sum_{\gamma\in\overline{Q}+\overline{\Lambda}_{3m}}e^{\gamma
-\frac{1}{2}|\gamma|^2\delta}
\]
where
\begin{equation}\label{6.5}
 \gamma= k_1\alpha_1
+k_2\alpha_2+\cdots +k_{4m-1}\alpha_{4m-1}+\overline{\Lambda}_{3m}\
.
\end{equation}
Using (\ref{6.4}) the vector $\gamma$ is written down by the base
$\Delta$
\begin{eqnarray}
 \gamma & = &(k_1+\frac{1}{4})\alpha_1
+(k_2+\frac{2}{4})\alpha_2+\cdots
+(k_{3m}+\frac{3m}{4})\alpha_{3m}\nonumber\\
& &
+(k_{3m+1}+\frac{3(m-1)}{4})\alpha_{3m+1}+(k_{3m+2}+\frac{3(m-2)}{4})\alpha_{3m+2}\nonumber\\
& & +\cdots+(k_{4m-2}+\frac{3\cdot
2}{4})\alpha_{4m-2}+(k_{4m-1}+\frac{3\cdot
1}{4})\alpha_{4m-1}\nonumber\ .
\end{eqnarray}
For the partition $\underline{4m}= \{1,4m-1\}$ it is obvious (see
\ref{5.20}) that the number $N$ is equal $4m-1$. Besides, the
mentioned partition implicates that the blockform of $4m\times 4m$
matrices looks like
\[
\left[\begin{array}{l|l}
   {B_{11}}_{\ \ 1\times 1} & {B_{12}}_{\ \ 1\times 4m-1}  \\ \hline
   {B_{21}}_{\ \ 4m-1\times 1} & {B_{22}}_{\ \ 4m-1\times 4m-1}
\end{array}\right]
\]
Following (\ref{5.27}), (\ref{5.28}) and (\ref{5.31}) we can
conclude that the specialization ${\cal F}_{\bf s}$, defined by
\begin{eqnarray}
{\bf s}& = &
(degE_{4m-1,1}^{21}+N\ ,\ degE_{1,1}^{12}\ ,\ degE_{1,2}^{22}\ ,...,\ degE_{4m-2,4m-1}^{22})\nonumber\\
&=& (2m,-2m+1,1,...,1)\nonumber\ ,
\end{eqnarray}
is the specialization for the connection between two standpoints.
More explicitly the specialization is given by
\[
\begin{array}{lll}
e^{-\alpha_0} &\longleftrightarrow& q^{2m}\\
e^{-\alpha_1} &\longleftrightarrow& q^{-2m+1}\\
e^{-\alpha_2} &\longleftrightarrow& q^{1}\\
\vdots && \vdots\\
e^{-\alpha_{4m-1}} &\longleftrightarrow& q^{1}\\
e^{-\delta} &\longleftrightarrow& q^{4m-1}\ .
\end{array}
\]
After  calculation
\[|\gamma|^2 = (\gamma\mid\gamma) = (\ref{6.5}) =2\kappa (k_1,...,k_{4m-1}) + 2k_{3m}+\frac{9m}{4}\]
and the fact that $mult\ n\delta$ always equals $dim
\mathfrak{h}=4m-1$ the left side of the formula (\ref{5.24}) has the
following form
\begin{eqnarray}\label{6.6}
&&{\cal F}_{\bf s} (\frac
{\sum_{\gamma\in\overline{Q}+\overline{\Lambda}}e^{\Lambda_0+\gamma-\frac{1}{2}|\gamma|^{2}\delta}}{\prod_{n\geq
1}(1-e^{-n\delta})^{mult\ n\delta}})\nonumber\\
&=& q^{const}\frac{\sum_{k_1,...,k_{4m-1}}q^{(4m-1)\kappa
(k_1,...,k_{4m-1})+ lin(k_1,...,k_{4m-1})}}{[\varphi
(q^{4m-1})]^{4m-1}}
\end{eqnarray}
where
\[
lin (k_1,...,k_{4m-1})=(2m-1)k_1 -k_2-\cdots -
k_{3m-1}+(4m-2)k_{3m}-k_{3m+1}-\cdots -k_{4m-1}\ .
\]
The right hand side of the formula (\ref{5.24}) for the mentioned
settings (\ref{6.3}) looks like
\begin{eqnarray}
&q^{const}&\prod_{j\geq
1}(1-q^{(4m-1)j})\frac{\sum_{k_1+k_2=3m}q^{\frac{4m-1}{2}(\frac{k_1^2}{1}+\frac{k_2^2}{4m-1})}}{
\prod_{j\geq 1}(1-q^{\frac{(4m-1)j}{1}})\prod_{j\geq
1}(1-q^{\frac{(4m-1)j}{4m-1}})}=\nonumber\\
=&q^{const}&\frac{\sum_{k_1+k_2=3m}q^{\frac{1}{2}[(4m-1)k_1^2+k_2^2]}}{
\varphi(q)}\nonumber\ .
\end{eqnarray}
After the substitution $k_2=3m-k_1$ the calculations
\begin{eqnarray}
\sum_{k_1+k_2=3m}q^{\frac{1}{2}[(4m-1){k_1^2}+{k_2^2}]}&=& \sum_{k_1\in\Bb Z}q^{\frac{1}{2}[(4m-1)k_1^2+(3m-k_1)^2)]}\nonumber\\
&=&q^{\frac{9m^2}{2}}\sum_{k_1\in\Bb Z}q^{\frac{m}{2}(4 k_1^2-6
k_1)}\nonumber\\
&=&q^{\frac{9m^2}{2}}\sum_{k_1\in\Bb Z}q^{m[2 (k_1-1)^2+(k_1-1)-1]}\nonumber\\
&=&q^{\frac{9m^2-2m}{2}}\sum_{k_1\in\Bb Z}q^{m(2
k_1^2+k_1)}\nonumber\\
(Gauss\ \ref{1}) & = & q^{\frac{9m^2-2m}{2}}\cdot\frac{\varphi
{(q^{2m})}^{2}}{\varphi (q^m)}\nonumber\
\end{eqnarray}
implicate that the right hand side of the formula (\ref{5.24}) has
the  form
\begin{equation}\label{6.7}
q^{const}\frac{\sum_{k_1+k_2=3m}q^{\frac{1}{2}[(4m-1)k_1^2+k_2^2]}}{
\varphi(q)}=q^{const}\frac{\varphi {(q^{2m})}^{2}}{\varphi(q)\varphi
(q^m)}\ .
\end{equation}
Now, from (\ref{6.6}) and (\ref{6.7}) it is obvious that the
series-product identity  (\ref{6.1}) holds for (\ref{6.2}).
\end{proof}
\section{THE CLASS $GAUSS(\underline{n}=m+3m,\Lambda_{4m-1})$}
\def\theequation{\thesection.\arabic{equation}}
\setcounter{equation}{0}

Denote again by $\kappa$ the following polynomial of several
variables
\[
\kappa (k_1,...,k_{l})= k_1^2+k_2^2+\cdots +k_{l}^2
-k_1k_2-k_2k_3-\cdots-k_{l-1}k_{l}\ .
\]
\begin{theorem}
Let $n=4m$ for an arbitrary positive integer $m$.  Then the
following series-product identity
\begin{equation}\label{7.1}
\sum_{k_1,...,k_{4m-1}\in\Bb Z}
q^{3m\kappa(k_1,...,k_{4m-1})+\widetilde{lin} (k_1,...,k_{4m-1})} =
\frac{\varphi(q^{3m})^{4m}\varphi
(q^{2})^2}{\varphi(q)^2\varphi(q^3)}
\end{equation}
holds for
\begin{eqnarray}\label{7.2}
\widetilde{lin} (k_1,...,k_{4m-1})&=& -3k_1-\cdots -3k_{m-1} +(3m-2)k_{m} \nonumber\\
&&-k_{m+1}-\cdots -k_{4m-2}+(3m-1)k_{4m-1}\ .
\end{eqnarray}
\end{theorem}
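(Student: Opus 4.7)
The plan is to run the same two-standpoint argument that proves Theorem 6.1, but with the data $\hat\mathfrak{g} = \widehat{\mathfrak{sl}}_{4m}$, partition $\underline{4m} = \{m, 3m\}$, and weight $\Lambda = \Lambda_{4m-1} = \Lambda_0 + \overline{\Lambda}_{4m-1}$. First I would expand the fundamental weight as $\overline{\Lambda}_{4m-1} = \frac{1}{4m}\sum_{i=1}^{4m-1} i\,\alpha_i$ (from the standard formulas for $\mathfrak{sl}_{4m}$), write $\gamma = \sum_i (k_i + \frac{i}{4m})\alpha_i$, and compute $|\gamma|^2$ via the Killing form. The interior contributions $4\mu_j - 2\mu_{j-1} - 2\mu_{j+1}$ (with $\mu_i = i/(4m)$) telescope to zero, so one obtains $|\gamma|^2 = 2\kappa(k_1,\ldots,k_{4m-1}) + 2k_{4m-1} + const$, where the single surviving linear term comes from the boundary at $i = 4m-1$.

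Next I would identify the correct specialization ${\bf s}$. From (\ref{5.20}) one gets $N = 3m$ (since $N' = \mathrm{lcm}(m,3m) = 3m$ and $3m(\frac{1}{m}+\frac{1}{3m}) = 4 \in 2{\Bb Z}$). Plugging $(n_1,n_2) = (m,3m)$ into (\ref{5.24a}) yields
\[
{\bf s} = \bigl(2,\underbrace{3,\ldots,3}_{m-1},2-3m,\underbrace{1,\ldots,1}_{3m-1}\bigr),
\]
so that $e^{-\delta}\mapsto q^{3m}$ and the denominator of (\ref{3.13}) specializes to $[\varphi(q^{3m})]^{4m-1}$ (since $mult\ n\delta = 4m-1$). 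Applying ${\cal F}_{\bf s}$ to the numerator, the exponent of $q$ collected from $e^{\gamma - |\gamma|^2\delta/2}$ is $-\sum s_i k_i + \frac{N}{2}|\gamma|^2 + const$, which equals $3m\,\kappa(k_1,\ldots,k_{4m-1}) + \widetilde{lin}(k_1,\ldots,k_{4m-1}) + const$ with $\widetilde{lin}$ exactly as in (\ref{7.2}): the coefficient $(3m-2)k_m$ is produced by the negative entry $s_m = 2-3m$, while the coefficient $(3m-1)k_{4m-1}$ comes from combining $-s_{4m-1}k_{4m-1} = -k_{4m-1}$ with the boundary shift $+3m\,k_{4m-1}$ from $N|\gamma|^2/2$.

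Then I would invoke Proposition 5.1 to equate this specialization with the right-hand side of (\ref{5.24}), which for the partition $\{m,3m\}$ at level $k = 4m-1$ simplifies to
\[
q^{const}\,\varphi(q^{3m})\cdot\frac{\sum_{k_1+k_2=4m-1}q^{(3k_1^2+k_2^2)/2}}{\varphi(q^3)\,\varphi(q)}.
\]
Substituting $k_2 = 4m-1-k_1$ and completing the square via $k_1 = n+m$ reduces the finite sum to $q^{const}\sum_{n\in{\Bb Z}} q^{2n^2+n}$, whereupon the classical Gauss identity (\ref{1}) evaluates it as $q^{const}\varphi(q^2)^2/\varphi(q)$. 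Equating the two specialized expressions and clearing the factor $[\varphi(q^{3m})]^{4m-1}$ produces the asserted identity (\ref{7.1}); the residual $q^{const}$ must reduce to $1$ by comparing lowest-order powers on both sides.

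The main computational obstacle is the third step --- the bookkeeping that reassembles the raw linear part $-\sum s_i k_i + N k_{4m-1}$ into the precisely asymmetric $\widetilde{lin}$ of (\ref{7.2}), in particular producing the two exceptional coefficients $(3m-2)k_m$ and $(3m-1)k_{4m-1}$ from two different sources (one from the negative entry of ${\bf s}$, the other from the boundary of $|\gamma|^2$). Everything else --- the computation of $N$, the telescoping in $|\gamma|^2$, the Gauss reduction via completing the square, and the accounting of the Euler products $\varphi(q^{3m}), \varphi(q^3), \varphi(q), \varphi(q^2)$ --- is routine.
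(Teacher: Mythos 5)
Your proposal is correct and follows essentially the same route as the paper's own proof: the same setting $(\widehat{\mathfrak{sl}}_{4m},\ \underline{4m}=\{m,3m\},\ \Lambda_{4m-1})$, the same specialization ${\bf s}=(2,3,\ldots,3,2-3m,1,\ldots,1)$ with $N=3m$, the same computation $|\gamma|^2=2\kappa+2k_{4m-1}+const$, and the same reduction of $\sum_{k_1+k_2=4m-1}q^{(3k_1^2+k_2^2)/2}$ to the classical Gauss identity by the substitution $k_1=n+m$. The bookkeeping you flag as the main obstacle (the two exceptional coefficients $(3m-2)k_m$ and $(3m-1)k_{4m-1}$ arising from $s_m=2-3m$ and from the boundary term of $|\gamma|^2$ respectively) is exactly how the paper obtains $\widetilde{lin}$.
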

\begin{proof}
First of all, the proof is based on the following setting:
\newpage
\begin{eqnarray}\label{7.3}
\hat\mathfrak{g}& = & \widehat\mathfrak{sl}_{4m}\nonumber\\
\underline{n} & = & \underline{4m}=\{m,3m\}\\
\Lambda & =& \Lambda_{4m-1}  =  \Lambda_0 +
\overline{\Lambda}_{4m-1}\nonumber\ .
\end{eqnarray}
Now,  it is well known (see \cite{Hum}) that
\begin{equation}\label{7.4}
\overline{\Lambda}_{4m-1} =  \frac{1}{4m}[\alpha_1
+2\alpha_2+3\alpha_3 +\cdots
+(4m-2)\alpha_{4m-2}+(4m-1)\alpha_{4m-1}]\ .
\end{equation}
The numerator of the formula (\ref{3.13}) looks like
\[
e^{\Lambda_0
+\frac{1}{2}|\Lambda_{4m-1}|^2\delta}\sum_{\gamma\in\overline{Q}+\overline{\Lambda}_{4m-1}}e^{\gamma
-\frac{1}{2}|\gamma|^2\delta}
\]
where
\begin{equation}\label{7.5}
  \gamma= k_1\alpha_1 +k_2\alpha_2+\cdots
+k_{4m-1}\alpha_{4m-1}+\overline{\Lambda}_{4m-1}\ .
\end{equation}
Using (\ref{7.4}) the vector $\gamma$ is written down by the base
$\Delta$
\[
\gamma =(k_1+\frac{1}{4m})\alpha_1
+(k_2+\frac{2}{4m})\alpha_2+\cdots
+(k_{4m-1}+\frac{4m-1}{4m})\alpha_{4m-1}\nonumber
\]
Now, the number $N$ is equal $3m$ when the partition is
$\underline{4m}= \{m,3m\}$ (see \ref{5.20}). This partition
implicates the blockform of $4m\times 4m$ matrices
\[
\left[\begin{array}{l|l}
   {B_{11}}_{\ \ m\times m} & {B_{12}}_{\ \ m\times 3m}  \\ \hline
   {B_{21}}_{\ \ 3m\times m} & {B_{22}}_{\ \ 3m\times 3m}
\end{array}\right]
\]
Following again (\ref{5.27}), (\ref{5.28}) and (\ref{5.31}) we can
concluded that the demanded specialization ${\cal F}_{\bf s}$ is
defined by
\begin{eqnarray}
{\bf s}& = &
(degE_{3m,1}^{21}+N,degE_{1,2}^{11},...,degE_{m-1,m}^{11},degE_{m,1}^{12},degE_{1,2}^{22},...,degE_{3m-1,3m}^{22})\nonumber\\
&=& (2,3,...,3,2-3m,1,...,1)\nonumber\ .
\end{eqnarray}
More explicitly
\[
\begin{array}{lll}
e^{-\alpha_{0}} &\longleftrightarrow& q^{2}\\
e^{-\alpha_1} &\longleftrightarrow& q^{3}\\
\vdots && \vdots\\
e^{-\alpha_{m-1}} &\longleftrightarrow& q^{3}\\
e^{-\alpha_{m}} &\longleftrightarrow& q^{2-3m}\\
e^{-\alpha_{m+1}} &\longleftrightarrow& q^{1}\\
\vdots && \vdots\\
e^{-\alpha_{4m-1}} &\longleftrightarrow& q^{1}\\
e^{-\delta} &\longleftrightarrow& q^{3m}\ .
\end{array}
\]
After  calculation
\[|\gamma|^2 = (\gamma\mid\gamma) = (\ref{7.5}) =2\kappa (k_1,...,k_{4m-1}) + 2k_{4m-1}+\frac{4m-1}{4m}\]
and the fact that $mult\ n\delta$ always equals $dim
\mathfrak{h}=4m-1$ the left side of the formula (\ref{5.24}) has the
following form
\begin{eqnarray}\label{7.6}
&&{\cal F}_{\bf s} (\frac
{\sum_{\gamma\in\overline{Q}+\overline{\Lambda}}e^{\Lambda_0+\gamma-\frac{1}{2}|\gamma|^{2}\delta}}{\prod_{n\geq
1}(1-e^{-n\delta})^{mult n\delta}})\nonumber\\
&=& q^{const}\frac{\sum_{k_1,...,k_{4m-1}}q^{(3m)\kappa
(k_1,...,k_{4m-1})+ \widetilde{lin}(k_1,...,k_{4m-1})}}{[\varphi
(q^{3m})]^{4m-1}}
\end{eqnarray}
where
\[
\widetilde{lin}  (k_1,...,k_{4m-1})=-3k_1-\cdots -3k_{m-1}
+(3m-2)k_{m}-k_{m+1}-\cdots -k_{4m-2}+(3m-1)k_{4m-1}\ .
\]
The right hand side of the formula (\ref{5.24}) for the mentioned
settings (\ref{7.3}) looks like
\begin{eqnarray}
&q^{const}&\prod_{j\geq
1}(1-q^{(3m)j})\frac{\sum_{k_1+k_2=4m-1}q^{\frac{3m}{2}(\frac{k_1^2}{m}+\frac{k_2^2}{3m})}}{
\prod_{j\geq 1}(1-q^{\frac{(3m)j}{m}})\prod_{j\geq
1}(1-q^{\frac{(3m)j}{3m}})}=\nonumber\\
=&q^{const}\varphi(q^{3m})&\frac{\sum_{k_1+k_2=4m-1}q^{\frac{1}{2}(3k_1^2+k_2^2)}}{
\varphi(q)\varphi(q^3)}\nonumber\ .
\end{eqnarray}
After the substitution $k_2=(4m-1)-k_1$ the calculations
\begin{eqnarray}
\sum_{k_1+k_2=4m-1}q^{\frac{1}{2}(3{k_1^2}+{k_2^2})}&=& \sum_{k_1\in\Bb Z}q^{\frac{1}{2}[3k_1^2+((4m-1)-k_1)^2]}\nonumber\\
&=&q^{\frac{(4m-1)^2}{2}}\sum_{k_1\in\Bb Z}q^{\frac{1}{2}(4
k_1^2-2(4m-1)k_1 )}\nonumber\\
&=&q^{\frac{(4m-1)^2}{2}}\sum_{k_1\in\Bb Z}q^{2 (k_1-m)^2+(k_1-m)-2m^2+m}\nonumber\\
&=&q^{\frac{(4m-1)^2}{2}-2m^2+m}\sum_{k_1\in\Bb Z}q^{2 (k_1-m)^2+(k_1-m)}\nonumber\\
(Gauss\ \ref{1}) & = & q^{\frac{14m^2-7m+1}{2}}\cdot\frac{\varphi
{(q^{2})}^{2}}{\varphi (q)}\nonumber\
\end{eqnarray}
implicate that the right hand side of the formula (\ref{5.24}) has
the  form
\begin{equation}\label{7.7}
q^{const}\varphi(q^{3m})\frac{\sum_{k_1+k_2=4m-1}q^{\frac{1}{2}(3k_1^2+k_2^2)}}{
\varphi(q)\varphi(q^3)}=q^{const}\frac{\varphi(q^{3m})\varphi
{(q^{2})}^{2}}{\varphi(q^3)\varphi (q)^2}\ .
\end{equation}
Now, from (\ref{7.6}) and (\ref{7.7}) it is obvious that the
series-product identity  (\ref{7.1}) holds for (\ref{7.2}).
\end{proof}
\begin{remark}
Notice that for $m=1$ series-product identities (\ref{2}) and
(\ref{3}) (i.e. (\ref{6.1}) and (\ref{7.1})) are the same.
\end{remark}


\begin{thebibliography}{99}
\bibitem{Kac} Kac, V.G.: {\em Infinite dimensional Lie algebras} ($3^{rd}$ edition),\\
Cambridge University Press, (1990).
\bibitem{FL} Feingold, A.J., Lepowsky, J.: The Weyl-Kac character
formula and power series identities, Adv. Math. 29 (1978), 271-309.
\bibitem{K1} Kac, V.G.: Infinite-dimensional Lie algebras and
Dedekind's $\eta$-function, classical M\"{o}bious function and very
strange formula, Adv. Math. 30 (1978), 85-136.
\bibitem{KP1} Kac, V.G., Peterson, D.H.: Infinite-dimensional Lie algebras, theta functions and modular forms,
Adv. Math. 53 (1984), 125-264.
\bibitem{KL} ten Kroode, F., van de Leur J.: Bosonic and fermionic realization of the affine algebra
$\widehat{\mathfrak{gl}}_n$, Comm.Math.Phys. 137 (1991), 67-107.
\bibitem{FK} Frenkel, I.B., Kac V.G.: Basic representation of affine
Lie algebras and dual resonance models, Invent. Math. 62 (1980),
23-66.
\bibitem{KP2} Kac, V.G., Peterson, D.H.: 112 construction of the basic representation of the loop group of $E_8$,
Proceedings of the conference "Anomalies, geometry,  topology"
Argone, 1985. World Sci. (1985), 276-298.
\bibitem{K3} Kac, V.G.: Automorphisms of finite order of semi-simple Lie
algebras, Funct.Anal.Appl. 3 (1969), 252-254.
\bibitem{TS2} \v{S}iki\' c T.: $\Bb Z$-Gradations of Classical Affine Lie Algebras and Kac Parameters,
Comm. in Algebra Vol. 32, No. 8 (2004),  2987-3016.
\bibitem{Hum} Humphreys, J.E.: {\em Introduction to Lie algebras and Representation Theory}.\\ NY, Heidelberg, Berlin: Springer-Verlag,
(1972).
\end{thebibliography}
\end{document}